\documentclass{article}
\usepackage[utf8]{inputenc}

\usepackage{amsmath,amsfonts,amssymb,amsthm,fancyhdr,bm,verbatim,hyperref,centernot}
\usepackage{fullpage}
\usepackage{mathtools}
\usepackage{todonotes}
\usepackage{amssymb}

\usepackage{tikz}
\usepackage{subcaption}
\tikzstyle{vertex}=[circle,fill=black,inner sep=2pt]

\makeatletter
\numberwithin{equation}{section}
\numberwithin{figure}{section}
\theoremstyle{plain}
\newtheorem{thm}{\protect\theoremname}
\theoremstyle{plain}

\theoremstyle{plain}

\newtheorem{prop}[thm]{\protect\propositionname}
\newtheorem{prob}[thm]{\protect\problemname}
\numberwithin{thm}{section}
\theoremstyle{definition}

\newcommand{\eps}{\varepsilon}
\newcommand{\ex}{\textnormal{ex}}

\makeatother

\usepackage{babel}
\providecommand{\corollaryname}{Corollary}
\providecommand{\lemmaname}{Lemma}
\providecommand{\theoremname}{Theorem}
\providecommand{\conjecturename}{Conjecture}
\providecommand{\propositionname}{Proposition}
\providecommand{\problemname}{Problem}

\title{Set-coloring Ramsey numbers via codes}
\author{David Conlon\thanks{Department of Mathematics, California Institute of Technology, Pasadena, CA 91125. Email: dconlon@caltech.edu. Research supported by NSF Award DMS-2054452.} \and
Jacob Fox\thanks{Department of Mathematics, Stanford University, Stanford, CA 94305. Email: jacobfox@stanford.edu. Research supported by a Packard Fellowship and by NSF Award DMS-1855635.} \and
Xiaoyu He\thanks{Department of Mathematics, Princeton University, Princeton, NJ 08544. Email: xiaoyuh@princeton.edu. Research supported by NSF Award DMS-2103154.} \and
Dhruv Mubayi\thanks{Department of Mathematics, Statistics and Computer Science, University of Illinois, Chicago, IL 60607. Email: mubayi@uic.edu. Research partially supported by NSF Awards DMS-1763317 and
DMS-1952767.} \and
Andrew Suk\thanks{Department of Mathematics, University of California at San Diego, La Jolla, CA 92093. Email: asuk@ucsd.edu. Research supported by an NSF
CAREER Award and NSF Award DMS-1952786.} \and 
Jacques Verstra\"ete\thanks{Department of Mathematics, University of California at San Diego, La Jolla, CA 92093. Email: jacques@ucsd.edu. Research supported by NSF Award DMS-1800332.}}
\date{}

\begin{document}

\maketitle

\begin{abstract} 
For positive integers $n,r,s$ with $r > s$, the set-coloring Ramsey number $R(n;r,s)$ is the minimum $N$ such that if every edge of the complete graph $K_N$ receives a set of $s$ colors from a palette of $r$ colors, then there is guaranteed to be a monochromatic clique on $n$ vertices, that is, a subset of $n$ vertices where all of the edges between them receive a common color. In particular, the case $s=1$ corresponds to the classical multicolor Ramsey number. We prove general upper and lower bounds on $R(n;r,s)$ which imply that $R(n;r,s) = 2^{\Theta(nr)}$ if $s/r$ is bounded away from $0$ and $1$. 
The upper bound extends an old result of Erd\H{o}s and Szemer\'edi, who treated the case $s = r-1$, while the lower bound exploits a connection to error-correcting codes. We also study the analogous problem for hypergraphs.
\end{abstract}

\section{Introduction}

The Ramsey number $R(n;r)$ is the minimum $N$ such that every $r$-coloring of the edges of the complete graph $K_N$ on $N$ vertices contains a monochromatic clique 
on $n$ vertices. The study of Ramsey numbers goes back more than a century, their first appearance arguably being in the work of Schur~\cite{Schur} from 1916 on Fermat's Last Theorem modulo a prime.

In the two-color case, classical results of Erd\H{o}s \cite{Erdos} and Erd\H{o}s--Szekeres \cite{ErdSzek} show that $2^{n/2} \leq R(n;2) \leq 4^n$. Despite concerted efforts, only lower-order improvements to these bounds have been made \cite{Conlon,Sah,Spencer} over the last seventy-five years and the exponential constants in the lower and upper bounds remain unchanged. 

For more colors, the best known bounds are of the form 
\begin{equation}\label{onecolor}2^{cnr} \leq R(n;r) \leq 2^{c'nr \log r}\end{equation}
for positive constants $c,c'$. Here the upper bound again follows from the method of Erd\H{o}s and Szekeres~\cite{ErdSzek}, while the lower bound was obtained by Lefmann \cite{Lef} through repeated application of the product inequality $R(n;r_1+r_2)-1 \geq \left(R(n;r_1)-1\right)\left(R(n;r_2)-1\right)$. 
For $r \geq 3$, the exponential constant $c$ in (\ref{onecolor}) was recently improved by Conlon and Ferber \cite{CoFe} through improving the bound for small $r$ and then applying Lefmann's product inequality, with further improvements subsequently made by Wigderson \cite{Wig} and by Sawin \cite{Saw}.

In this paper, we consider a natural generalization of the Ramsey problem where each edge is assigned a set of $s$ colors, instead of just one. Formally, for any positive integers $n,r,s$ with $r>s$, the \emph{set-coloring Ramsey number} $R(n;r,s)$ is the minimum positive integer $N$ such that in every edge-coloring $\chi: E(K_N)\rightarrow \binom{[r]}{s}$ there is a monochromatic $n$-clique, i.e., a set of $n$ vertices $v_1,\ldots ,v_n$ such that $\bigcap_{i<j}\chi(v_iv_j) \ne \emptyset$. In words, the set-coloring Ramsey number $R(n;r,s)$ is the minimum $N$ such that if every edge of $K_N$ receives a set of $s$ colors from a palette of $r$ colors, then there is guaranteed to be a monochromatic clique on $n$ vertices, that is, a copy of $K_n$ whose edges all share a common color. As a shorthand, it will be convenient for us to refer to such a set-coloring $\chi$ as an {\it $(r,s)$-coloring} of $K_N$.

The set-coloring Ramsey number and its special cases have been studied by many researchers for over half a century (see \cite{AEGM,BK22,BS18,EHR,ErSz,PSY,XSSL, ZXXC}). Indeed, note that when $s=1$, $R(n;r,1)=R(n;r)$ is just the classical Ramsey number. At the other extreme, when $s=r-1$, the problem was originally studied by Erd\H{o}s, Hajnal and Rado \cite{EHR} in 1965. Several years later, in 1972, Erd\H{o}s and Szemer\'edi~\cite{ErSz} proved that there are positive constants $c,c'$ such that 
\begin{equation} \label{eqn:ErSzem}
2^{cn/r} \leq R(n;r,r-1) \leq 2^{c'n\log r/r}.
\end{equation}
Comparing (\ref{onecolor}) and (\ref{eqn:ErSzem}), we see that at both  $s=1$ and $s=r-1$, the lower and upper bounds are off by a factor of $\log r$ in the exponent. Improving on the gap in either of these cases would be a major advance. 

In contrast, our main result, Theorem~\ref{corlow} below, allows us to determine the order of $R(n;r,s)$ up to an absolute constant in the exponent when $s/r$ is bounded away  from $0$ and $1$. More precisely, we have that
$R(n;r,s) = 2^{\Theta(nr)}$ whenever $\eps r < s < (1-\eps)r$ for a fixed $\eps > 0$. Somewhat surprisingly, this bound is of the same order as Lefmann's lower bound (\ref{onecolor}) for the $s = 1$ case, even though each edge now receives, say, half of the $r$ colors, instead of just one. 

An upper bound for $R(n;r,s)$ was proved in \cite{ZXXC} by extending the Erd\H{o}s--Szekeres \cite{ErdSzek} argument for $s=1$ to general $s$. For instance, a simple bound that can be deduced by this method is
\begin{equation}\label{simpleErSzek}R(n;r,s) \leq \frac{r}{r-s} \left(\frac{r}{s}\right)^{(n-2)r+1}.\end{equation}
However, if $s$ is close to $r$, the upper bound (\ref{simpleErSzek}) is not very good. Indeed, if $s=r-1$, the Erd\H{o}s--Szemer\'edi upper bound given in (\ref{eqn:ErSzem}) is considerably better. Here, we extend the Erd\H{o}s--Szemer\'edi argument to prove a better upper bound for general $s$. 
Both our upper and lower bounds are encapsulated in the following theorem.

\begin{thm}\label{corlow}
There exist constants $c,c'>0$ such that, for any integers $n, r, s$ with $n \geq 3$ and $r>s \geq 1$, 
$$a_r \cdot 2^{c n \max((r-s) r^{-1}, (r-s)^3 r^{-2})}
\leq R(n;r,s) \leq 2^{c'n(r-s)^2 r^{-1} \log\left(\frac{r}{\min(s,r-s)}\right)},$$
where $a_r$ is a constant depending only on $r$.
\end{thm}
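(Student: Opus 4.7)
The bound consists of two directions, which I treat separately.

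For the upper bound $R(n;r,s) \leq 2^{c'n(r-s)^2 r^{-1}\log(r/\min(s,r-s))}$, the plan is to extend the Erd\H{o}s--Szemer\'edi argument for $s = r-1$ to arbitrary $s < r$. Given an $(r,s)$-coloring of $K_N$, I would iteratively build a partial monochromatic clique $v_1, \ldots, v_i$ by choosing each $v_{i+1}$ from a common neighborhood defined by a shrinking set $S_i \subseteq [r]$ of candidate colors (those present on every edge of the partial clique so far). The naive Erd\H{o}s--Szekeres rule of passing to the most common color class shrinks the vertex set by a factor of roughly $s/r$ per step and yields only (\ref{simpleErSzek}). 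The improvement comes from averaging in the dual palette of missing colors: when $r-s$ is small, one can choose an adaptive pivot color $c_i$ so that each step loses either a factor $\sim 1 - (r-s)/r$ in the vertex count or halves the candidate color set, whichever is cheaper. Optimizing this trade-off across the $n - 1$ steps produces the $(r-s)^2 r^{-1}\log(r/\min(s,r-s))$ factor, with the logarithm being the binary-search depth on the palette and the $\min$ reflecting the symmetry between working with $s$ colors or $r - s$ missing colors.

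For the lower bound, the two terms in the $\max$ come from two distinct constructions. The first term $2^{cn(r-s)/r}$ is immediate from a random construction: color each edge of $K_N$ with a uniformly random $s$-subset of $[r]$, independently. For each color $c$, the graph $G_c$ of edges containing $c$ is distributed as $G(N, s/r)$, so the expected total number of monochromatic $K_n$'s is $r\binom{N}{n}(s/r)^{\binom{n}{2}}$, which is $o(1)$ once $N \ll (r/s)^{(n-1)/2}$; combined with $\log(r/s) \geq (r-s)/r$, this gives the first term. The second term $2^{cn(r-s)^3/r^2}$ requires the code-based construction: the plan is to establish a set-coloring analogue of Lefmann's product inequality, namely that if $\chi_i : E(K_{N_i}) \to \binom{[r_i]}{s_i}$ has no monochromatic $K_n$ for $i = 1, 2$, then on $[N_1] \times [N_2]$ there is an $(r_1+r_2, s_1+s_2)$-coloring of $K_{N_1 N_2}$ with no monochromatic $K_n$, assembled from $\chi_1$ and $\chi_2$ on ``generic'' edges and an auxiliary error-correcting code on the ``diagonal'' edges where one coordinate coincides. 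Iterating this product from a suitable code-based base case yields the $(r-s)^3/r^2$ exponent, and specializing to $s = 1$ recovers Lefmann's $R(n;r) \geq 2^{cnr}$.

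The main obstacle is ensuring that every edge in the product coloring receives \emph{exactly} $s_1 + s_2$ colors: the natural assignment gives size $s_1$ on intra-column edges, $s_2$ on intra-row edges, and $s_1 + s_2$ on generic edges. Padding these to a uniform size without introducing spurious monochromatic cliques is the central role of the codes in the construction --- one needs an auxiliary code over $[r_1+r_2]$ with good clique-avoidance properties on the diagonals. On the upper-bound side, a secondary obstacle is interpolating smoothly between the $s$-small and $r-s$-small regimes so that the $\min$ appears naturally; this is handled by working throughout with whichever of the two palettes is smaller.
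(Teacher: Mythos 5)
Your lower-bound sketch has the right ingredients but mislocates the role of the code. The first-moment bound does give the $2^{cn(r-s)/r}$ term, as you say. For the second term, the paper does not build a two-coloring pairwise product with an auxiliary code painted on the ``diagonal'' edges; instead it takes the $m$-fold power of a single $(a,b)$-coloring $\chi$ of $K_q$ (with $q = R(n;a,b)-1$) on the vertex set $[q]^m$, assigning the edge $(x,y)$ the set $\bigcup_{i:\,x_i\ne y_i}\chi(x_i,y_i)\times\{i\}$, so that the number of colors on an edge is exactly $b$ times its Hamming distance. The crucial move is then to \emph{restrict the vertex set} to a code $C\subset[q]^m$ of minimum distance $d$, so every surviving edge has at least $db=s$ colors (excess colors are discarded arbitrarily); there is no diagonal-edge issue because those edges are simply absent from the induced subgraph. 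Your worry about spurious monochromatic cliques from padding is exactly what this vertex restriction avoids. The quantitative bound then comes from Gilbert--Varshamov ($|C|\ge m^{-d}q^{m-d}$) with the base case $a\approx 16r/(r-s)$, $b=a-1$, $m=r'/a$, $d=s'/b$.

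Your upper-bound sketch does not match the paper's argument and, as written, has a gap. Building one partial clique $v_1,\ldots,v_i$ while intersecting the color sets along the way requires that intersection to stay non-empty for $n-1$ steps, and no mechanism in your sketch ensures this; ``binary search on the palette'' is a heuristic, not an argument, and the $\log$ factor in the actual proof does not arise that way. The paper instead runs a shrinking sequence of \emph{vertex sets} $S_0\supset S_1\supset\cdots$ while tracking the whole vector of clique numbers $(\omega_1(S_j),\ldots,\omega_r(S_j))$ and a dynamically growing set of ``on'' colors. At each step, either an on-color has density $>1/2$ in $S_j$ and one passes to a neighborhood, halving $|S_j|$ and decrementing that $\omega_i$; or else some off-color has density $\ge 1-\eps$ (forced because the average color density is $s/r=1-\eps$ and all on-colors are low-density), and a pigeonhole on a maximum clique in that color produces a subset $S_{j+1}$ with $|S_{j+1}|\gtrsim\eps^{10\eps n}|S_j|$ and $\omega_i(S_{j+1})<10\eps n$, after which that color turns on. A counting argument shows at most $3(r-s)$ colors ever turn on, and the $(r-s)^2/r\cdot\log(r/(r-s))$ exponent falls out of $3(r-s)\cdot 10\eps n$ halving steps plus $3(r-s)$ turning-on steps each costing a factor $\eps^{-10\eps n}$. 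This two-phase density dichotomy with clique-number bookkeeping is the actual extension of Erd\H{o}s--Szemer\'edi, and it is structurally quite different from a single-clique-growing procedure.
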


In particular, setting $s = 1$ or $r-1$, 
we see that this result 
simultaneously generalizes the earlier results \eqref{onecolor} and \eqref{eqn:ErSzem}. 
Moreover, if $\eps r < s < (1-\eps) r$, then we obtain $R(n; r, s) = 2^{\Theta(nr)}$, as claimed above. 
In general, the lower bound for $\log_2 R(n; r,s)/((r-s)^2r^{-1} n)$ is at least a constant unless $s$ is close to $r$, when it can be as small as $r^{-1/2}$, while the upper bound is at most a constant unless $s$ is close to $1$ or $r$, when it can be as large as $\log r$.

The lower bound construction in Theorem~\ref{corlow}, which might be considered the main novelty of the paper, uses a product coloring together with the classical Gilbert--Varshamov lower bound on the size of the largest error-correcting code over a $q$-ary alphabet. 

Theorem~\ref{corlow} will be proved over the next two sections, beginning with this lower bound. In addition to this result, in Section~\ref{sec:codes}, we study $R(n;r,s)$ when $n$ is fixed and the color parameters $r$ and $s$ grow, showing that in this regime the problem is again tightly connected to error-correcting codes. Then, in Section~\ref{sec:hyper}, we study the natural hypergraph extension of $R(n;r,s)$. We  conclude with some problems and further remarks. For the sake of clarity of
presentation, we sometimes omit floor and ceiling signs when they are not essential.

\section{The lower bound in Theorem~\ref{corlow}} \label{sec:lower}

The goal of this section is to prove the lower bound in Theorem \ref{corlow}. We will need some notation pertaining to error-correcting codes. Let $A_q(m,d)$ be the maximum size of a code $C \subset [q]^m$ of length $m$ in which any two codewords have Hamming distance at least $d$, i.e., they differ in at least $d$ coordinates. Such a code is called a {\it $q$-ary code of length $m$ and distance $d$}. A basic fact from coding theory is that, for $d$ odd, $A_q(m,d)$ is the size of the largest $q$-ary code of length $m$ that corrects for $(d-1)/2$ errors. We deduce the lower bound in Theorem \ref{corlow} from Theorem~\ref{mainineq} below, which connects set-coloring Ramsey numbers to error-correcting codes.

Like Lefmann's product construction for $R(n;r)$, we construct an $(r,s)$-coloring with no monochromatic $K_n$ by taking a product of $(a,b)$-colorings with no monochromatic $K_n$, where $a$ and $b$ are much smaller than $r$ and $s$. We assign an edge between two vertices in this product the union of the set of colors on the edges in the coordinates where they differ. However, 
some edges may receive too few colors, so, instead of using the entire product set, we pass to an induced subgraph whose vertex set is an appropriate error-correcting code. 
Since any two points in the code differ in many coordinates,  this guarantees that there are many colors on each edge.

\begin{thm}\label{mainineq}
Let $a,b,d,m$ be positive integers with $b<a$ and $d<m$ and let $r=ma$ and $s=db$. Then, for $q=R(n;a,b)-1$,
$$R(n;r,s) > A_q(m,d).$$
\end{thm}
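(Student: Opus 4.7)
The plan is to construct, for $N = A_q(m,d)$, an $(r,s)$-coloring of $K_N$ with no monochromatic $K_n$. The two ingredients are: a fixed $(a,b)$-coloring $\chi_0 \colon E(K_q) \to \binom{[a]}{b}$ with no monochromatic $K_n$ (which exists since $q = R(n;a,b)-1$), and a $q$-ary code $C \subseteq [q]^m$ of length $m$ and distance $d$ of maximum size $A_q(m,d)$. We use the codewords as vertices and identify the color palette $[r]$ with $[m] \times [a]$, so a color is a pair (coordinate, local color).

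For an edge $xy$ with $x,y \in C$, let $D(x,y) = \{i \in [m] : x_i \neq y_i\}$; by the distance property $|D(x,y)| \geq d$. Choose an arbitrary subset $D'(x,y) \subseteq D(x,y)$ of size exactly $d$, and set
\[
\chi(xy) \;=\; \bigcup_{i \in D'(x,y)} \{i\} \times \chi_0(x_i y_i).
\]
Since the unions are over distinct first coordinates and each $\chi_0(x_i y_i)$ has size $b$, we get $|\chi(xy)| = db = s$, so this is a valid $(r,s)$-coloring.

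To rule out a monochromatic $K_n$, suppose $v_1,\dots,v_n \in C$ share a common color $(i_0, j_0) \in [m]\times[a]$ on every edge. From $i_0 \in D'(v_p,v_q) \subseteq D(v_p,v_q)$ for all $p \neq q$, the $i_0$-th coordinates $(v_1)_{i_0},\dots,(v_n)_{i_0}$ are pairwise distinct elements of $[q]$. From $j_0 \in \chi_0\bigl((v_p)_{i_0}(v_q)_{i_0}\bigr)$ for all $p \neq q$, these $n$ vertices form a monochromatic $K_n$ in color $j_0$ under $\chi_0$, contradicting the choice of $\chi_0$. Hence $R(n;r,s) > N = A_q(m,d)$.

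The main thing to get right is the bookkeeping that ensures each edge receives exactly $s$ colors while preserving the implication from a monochromatic color in the product coloring to a monochromatic color in $\chi_0$; the distance condition $|D(x,y)| \geq d$ is precisely what makes the truncation to $d$ differing coordinates possible, and the disjointness of the sets $\{i\}\times\chi_0(x_iy_i)$ across different $i$ is what makes the color count exact. Everything else is a direct application of the definitions.
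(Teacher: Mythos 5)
Your proof is correct and follows essentially the same approach as the paper: both define a product coloring on $[q]^m$ using a good $(a,b)$-coloring of $K_q$, pass to a code of distance $d$ to guarantee at least $s = db$ colors per edge, and truncate to exactly $s$ colors. The only cosmetic difference is that you truncate by choosing a $d$-element subset of differing coordinates and taking all $b$ colors from each, which makes the count $|\chi(xy)| = db$ immediate, whereas the paper phrases the truncation as picking an arbitrary $s$-element subset of the color set.
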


\begin{proof}
Consider an $(a,b)$-coloring $\chi:E(K_q)\rightarrow \binom{[a]}{b}$ with no monochromatic $K_n$. Such a coloring exists by the choice of $q=R(n;a,b)-1$. From $\chi$, we obtain a product set-coloring $\phi$ of the complete graph on $[q]^m$ as follows. The palette consists of the $r$ colors in $[a]\times [m]$ and an edge $(x_1,\ldots ,x_m)\sim (y_1,\ldots, y_m)$ is assigned the set $\bigcup_{i,x_i \not = y_i}(\chi(x_i,y_i)\times \{i\})$. Note that $\phi$ is not yet an $(r,s)$-coloring, because the number of colors on each edge can vary.

Also observe that in $\phi$, each edge gets a subset of the $r$ colors and there is no monochromatic $K_n$. For a given edge $(u,v)$, the number of colors $|\phi(u,v)|$ is exactly $b$ times the Hamming distance between $u$ and $v$. However, we would like for each edge to receive exactly $s$ colors. The edges that receive more than $s$ colors are easy to deal with, as we can simply take arbitrary subsets of their sets of colors of size $s$. The more substantial issue is that some edges might receive fewer than $s$ colors. In order to avoid this problem, we pick a code $C \subset [q]^{m}$ of maximum size and distance $d$ (so each edge between these vertices receives at least $db=s$ colors). 
By definition, such a code has size $A_q(m,d)$. Since each edge of the induced subgraph on $C$ receives at least $db=s$ colors and there is no monochromatic $K_n$, this completes the proof. 
\end{proof}

The Gilbert--Varshamov bound, due independently to Gilbert~\cite{Gil} and Varshamov~\cite{Var}, states that $A_q(m,d) \ge q^m/B(d-1)$, where $B(d-1)$ denotes the volume of a Hamming ball of radius $d-1$ in $[q]^m$. The exact formula for $B(d-1)$ is slightly unwieldy, so we will content ourselves with the simple estimate $B(d-1) \le (mq)^d$.  Together with the Gilbert--Varshamov bound, this gives $A_q(m,d) \ge m^{-d} q^{m-d}$.

With this estimate in hand, we are ready to prove the lower bound in Theorem~\ref{corlow}.

\begin{proof}[Proof of the lower bound in Theorem~\ref{corlow}.]
It suffices to show that, for $n$ sufficiently large in terms of $r$ and $s$,
\[
R(n;r,s) \ge 2^{cn \max((r-s)r^{-1}, (r-s)^3 r^{-2})}
\]
or, equivalently, that
\[
R(n;r,s) \ge 
\begin{cases}
2^{cn (r-s)/r} & \textnormal{ if } r-s \le 8 \sqrt{r} \\
2^{cn (r-s)^3/r^2} & \textnormal{ if } r-s > 8 \sqrt{r}.
\end{cases}
\]
Here $8$ is an arbitrary constant we chose for convenience.

As shown in  \cite[Theorem 4]{XSSL}, an application of the first moment method gives 
\begin{equation}\label{eq:first-moment}
R(n; a, b) \geq \left(a/b\right)^{(n-1)/2}(n!/a)^{1/n} \geq 2^{n(a-b)/3b},
\end{equation}
where the last inequality holds provided $b \geq a/2$ and $n! \geq a$ and uses $n \geq 3$ and $1+x \geq 2^x$ for $0 \leq x \leq 1$ with $x=(a-b)/b$. When $s > r - 8 \sqrt{r}$, we can apply (\ref{eq:first-moment}) directly with $a = r$ and $b = s$ to get $R(n; r, s) \geq 2^{n(r-s)/3s}\ge 2^{n(r-s)/6r}$, as desired.

Suppose now that 
$s \leq r - 8\sqrt{r}$ and let $a = \lfloor \frac{16r}{r-s} \rfloor \leq \frac{r-s}{4}$ and $b = a - 1$. If we choose the largest $r' \leq r$ which is a multiple of $a$ and the smallest $s' \geq s$ which is a multiple of $b = a-1$, then we can apply Theorem~\ref{mainineq} with $m = r'/a$ and $d = s'/b$ to conclude that 
$$R(n;r,s) \geq R(n;r',s') > A_q(m,d) \geq m^{-d} q^{m-d} = \left(\frac{a}{r'}\right)^{\frac{s'}{b}} (R(n;a,b)-1)^{\frac{r'}{a}-\frac{s'}{b}}.$$
But, since $s \leq s' + a$, 
$$\frac{r'}{a}-\frac{s'}{b} = \frac{r'}{a}-\frac{s'}{a-1} \geq \frac{r'}{a}-\frac{s'}{a} \left(1 + \frac{2}{a}\right) = \frac{r' - s'}{a}-\frac{2s'}{a^2} \geq \frac{r' - s'-2}{a}-\frac{2s}{a^2} \geq \frac{r - s}{2a} - \frac{2s}{a^2} \geq \frac{(r - s)^2}{64r}.$$
Hence, since $R(n; a, a-1) \geq 2^{n/6a} \geq 2^{n(r-s)/96r}$,
$$R(n;r,s) \geq \left(\frac{a}{r'}\right)^{\frac{s'}{b}}(R(n;a,a-1)-1)^{(r - s)^2/64r} \geq 2^{cn(r-s)^3/r^2}$$
for some $c > 0$, as required.
\end{proof}

\section{The upper bound in Theorem~\ref{corlow}}\label{sec:upper}

We now prove the upper bound on $R(n;r,s)$ claimed in Theorem \ref{corlow}. Observe that (\ref{simpleErSzek}) gives the claimed bound for $s \leq 9r/10$, so we may assume that $s > 9r/10$. The desired bound then follows from the next result. 

\begin{thm}
For any positive integers $r, s$ with $s > 9r/10$, 
$$R(n;r,s) \leq \left(\frac{r}{r-s}\right)^{500n(r-s)^2/r}.$$ 
\end{thm}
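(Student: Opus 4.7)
I would prove the upper bound by extending the iterative argument of Erd\H{o}s and Szemer\'edi (the case $s=r-1$, i.e.\ $t:=r-s=1$) to the regime $s>9r/10$ in which $t<r/10$. Given an $(r,s)$-coloring $\chi:E(K_N)\to\binom{[r]}{s}$, I would construct a monochromatic $K_n$ step by step, maintaining at stage $i$ a partial clique $\{v_1,\dots,v_i\}$, a live vertex set $V_i\subseteq V(K_N)$, and a live color palette $L_i\subseteq[r]$ satisfying the invariant $L_i\subseteq\chi(v_jw)$ for every $j\leq i$ and every $w\in V_i$. Once stage $i=n-1$ is reached with $|V_{n-1}|\geq 1$ and $|L_{n-1}|\geq 1$, any $v_n\in V_{n-1}$ together with any color in $L_{n-1}$ completes the desired monochromatic $K_n$.

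The core iteration step: given any $v_{i+1}\in V_i$, we choose $L_{i+1}\subsetneq L_i$ of scheduled size $\ell_{i+1}$ and set $V_{i+1}:=\{w\in V_i\setminus\{v_{i+1}\}:L_{i+1}\subseteq\chi(v_{i+1}w)\}$. Writing $M(e):=[r]\setminus\chi(e)$, which has size $t$, and noting that $|M(v_{i+1}w)\cap L_i|\leq t$, a uniformly random $L_{i+1}\subseteq L_i$ of size $\ell_{i+1}$ yields $\Pr[L_{i+1}\cap M(v_{i+1}w)=\emptyset]\geq\binom{\ell_i-\ell_{i+1}}{t}\big/\binom{\ell_i}{t}$. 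Derandomising gives an $L_{i+1}$ with
\[
|V_{i+1}|\;\geq\;(|V_i|-1)\cdot\binom{\ell_i-\ell_{i+1}}{t}\Big/\binom{\ell_i}{t}.
\]

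The plan is then to schedule $r=\ell_0\geq\ell_1\geq\cdots\geq\ell_{n-1}\geq 1$ so that the total shrinkage $\prod_{i=0}^{n-2}\binom{\ell_i-\ell_{i+1}}{t}/\binom{\ell_i}{t}$ exceeds $1/N$ with $N=\lceil(r/t)^{500nt^2/r}\rceil$. In the short-horizon case $n\lesssim r/t$, a schedule removing $a_i:=\ell_i-\ell_{i+1}=t$ per step fits the budget $\sum_i a_i\leq r-1$; in the complementary range $n\gg r/t$ this constant-$t$ schedule exceeds the budget and a finer schedule with smaller per-step removals becomes necessary. The main obstacle---and the technical heart of the argument---is that the worst-case per-step estimate $\binom{a_i}{t}/\binom{\ell_i}{t}$ is too weak once $|L_i|\ll r$; to sharpen it one exploits that $|M(e)\cap L_i|$ is typically close to the averaged value $t|L_i|/r\ll t$ rather than the worst case $t$. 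Achieving this improvement rigorously---for instance via a careful choice of $v_{i+1}$ minimising the total $L_i$-deficiency $\sum_{w\in V_i}|M(v_{i+1}w)\cap L_i|$, or by an amortised potential argument tracking $\sum_i (t|L_i|/r)\log(|L_i|/a_i)$---should yield the tight per-step exponent $t^2/r$ and hence, after summing over the $n-1$ steps and using $t<r/10$ to absorb lower-order constants, give the target bound $(r/t)^{500nt^2/r}$.
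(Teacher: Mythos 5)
Your approach---greedily building a clique while shrinking a ``live'' color palette $L_i$ and vertex set $V_i$ in tandem, with a derandomized hypergeometric bound controlling each step's survival rate---is genuinely different from the paper's proof, which runs a global contradiction argument rather than a greedy construction. Unfortunately, as written, the proposal does not close, and the gap is not a technicality but the crux of the theorem.

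First, the ``short-horizon'' claim is already off. With $t:=r-s$ and the constant schedule $a_i=t$, the per-step factor is $\binom{t}{t}/\binom{\ell_i}{t}=1/\binom{\ell_i}{t}\geq (t/(er))^{t}$, so the total shrinkage over $n-1$ steps is about $(t/(er))^{t(n-1)}$. Requiring this to exceed $1/N=(t/r)^{500nt^2/r}$ amounts, after taking logarithms, to $t(n-1)\log(er/t)\lesssim 500n(t^2/r)\log(r/t)$, i.e.\ $r\lesssim 500\,t$. So even the ``easy'' regime only works when $t$ is within a constant factor of $r$; for $t=o(r)$ (which is most of the range $s>9r/10$) the constant-$t$ schedule gives a bound exponentially worse than the target. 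Second, in the long-horizon regime $n\gg r/t$ your suggested ``finer schedule with smaller per-step removals'' is not available at all: once $a_i<t$ the bound $\binom{a_i}{t}/\binom{\ell_i}{t}=0$ is vacuous, and the budget $\sum a_i\leq r-1$ forbids $n-1$ steps with $a_i\geq t$ when $n>r/t$. Third, the proposed fix---replacing the worst case $|M(e)\cap L_i|\leq t$ by a ``typical'' value $\approx t|L_i|/r$---is unjustified against an adversarial coloring: the live palette $L_i$ is chosen adaptively, but the coloring is fixed in advance, and nothing prevents the adversary from arranging that the $t$ missing colors on many edges incident to a useful $v_{i+1}$ all lie inside $L_i$ (e.g.\ every edge at $v_{i+1}$ missing the same color $c\in L_i$ forces you to spend one of your palette removals on $c$). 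Moreover, even granting the averaged bound, the resulting per-step survival factor $(1-t/r)^{\ell_{i+1}}$ summed over $n-1$ steps gives roughly $(1-t/r)^{\sum\ell_{i+1}}$, and $\sum\ell_{i+1}$ is on the order of $nr$ when $\ell_i$ stays comparable to $r$; that is far too small compared to $1/N$.

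The structural reason the palette-shrinking greedy cannot match the theorem is that it requires the palette to decrease monotonically, so it can take at most $\sim r/t$ meaningful steps, whereas the theorem needs to handle $n$ arbitrarily large for fixed $r,s$. The paper instead argues by contradiction on a shrinking vertex set $S_0\supset S_1\supset\cdots$, keeping a dynamic set of colors that are ``turned on,'' never discarding colors permanently. When some on-color has density above $1/2$, one passes to a color-neighborhood of a high-degree vertex, halving $S_j$ and decrementing that color's clique number; otherwise one finds an off-color of density $\geq 1-\eps$, extracts a large near-complete bipartite structure around a maximum clique $Q$ in that color via a pigeonhole on the non-neighbor sets inside $Q$, and turns the color on with its clique number capped at $O(\eps n)$. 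A density count shows at most $3(r-s)$ colors ever turn on, which bounds the total shrinkage and gives the contradiction. Reusing colors and capping clique numbers (rather than an irreversibly shrinking palette) is precisely what lets the paper iterate far beyond $r/t$ steps.
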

\begin{proof}
Let $N= \left(\frac{r}{r-s}\right)^{500n(r-s)^2/r}$ and suppose, for the sake of contradiction, that the edges of $K_N$ are each colored with $s$ colors from $[r]$ such that there is no monochromatic $K_n$.
We will describe a process, at each step of which some of the colors are turned on and the rest are turned off. We begin with all colors turned off. Once a color is turned on, it remains on. At each step $j$ of the process, we will have a set of remaining vertices $S_j$, chosen so that the sets $S_0 \supset S_1 \supset \cdots$ shrink at each step. We also define $\omega_i(S_j)$ to be the clique number of $S_j$ in color $i$ and keep track of the vector $(\omega_1(S_j),\ldots , \omega_r(S_j))$. To begin, we let $S_0=V(K_N)$, noting that $\omega_i(S_0) <n$ for each $i$.  Assuming $|S_j| > 10n$, we next describe how we find $S_{j+1} \subset S_j$. 

Suppose there is a color $i$ that is on in step $j$ with the density of edges in color $i$ within $S_j$ more than $1/2$. Then there is a vertex $v_j$ whose incident edges have density more than $1/2$ in color $i$. 
In this case, we let $S_{j+1}$ be the set of vertices that are adjacent to vertex $v_j$ in color $i$, so $|S_{j+1}| \geq |S_j|/2$. Since we may add $v_j$ to any clique in $S_{j+1}$ of color $i$ to form a larger clique of color $i$ in $S_j$, we find that $\omega_i(S_{j+1})\leq \omega_i(S_j) - 1$.

Otherwise, there is no color that is on at step $j$ which has edge density more than $1/2$. Letting $\eps = 1-s/r$, we then pick a color $i$ that is off in step $j$ such that the density of edges in color $i$ is at least $1-\eps$. 
Such a color exists since $\eps < 1/10$ and the average color density is $s/r=1-\eps$. Let $T \subset S_j$ be the set of vertices with edge density at least $1-2\eps$ in color $i$, so $|T| \geq |S_j|/2$. Pick a maximum monochromatic clique $Q \subset T$ in color $i$. As there is no monochromatic $K_n$, we have $|Q| < n < |S_j|/10 \leq |T|/5$ and each vertex in $Q$ has at most $2\eps|S_j|$ incident edges with vertices in $S_j$ that do not contain color $i$. Hence, each vertex in $Q$ has degree in color $i$ to $T \setminus Q$ at least $|T|-2\eps|S_j|-|Q| \geq (1-5\eps)|T \setminus Q|$. Therefore, the edge density in color $i$ from $Q$ to $T \setminus Q$ is at least $1-5\eps$. 

Let $U \subset T \setminus Q$ consist of those vertices whose degree in color $i$ is at least $(1-10\eps)|Q|$ to $Q$, so $|U| \geq (|T|-|Q|)/2 \geq |T|/4$. By the pigeonhole principle, there is a subset 
$Q' \subset Q$ with $|Q'| \leq 10\eps|Q|$ and a subset $S_{j+1} \subset U$ such that $|S_{j+1}| \geq |U|/{|Q| \choose 10\eps |Q|} \geq \eps^{10\eps n}|U| \geq \frac{1}{8} \eps^{10\eps n}|S_j|$ and each vertex in $S_{j+1}$ is such that its set of non-neighbors in color $i$ in $Q$ is a subset of $Q'$. We can make a monochromatic clique of color $i$ by combining $Q \setminus Q'$ and any monochromatic clique of color $i$ from $S_{j+1}$. By the choice of $Q$, this implies that $\omega_i(S_{j+1}) \leq 10\eps |Q| < 10 \eps n$. 
We then turn color $i$ on.

We next claim that at most $3(r-s)$ colors are ever on. Indeed, once $3(r-s)$ colors are on, no other color can turn on, as otherwise the sum of the edge densities of the colors would be at most 
$\frac{1}{2}(3(r-s))+\left((r-3(r-s)\right) < s$. However, this contradicts the fact, which follows since each edge has exactly 
$s$ colors on it, that the sum of the edge densities of the colors on any subset is $s$.  

At each step $j$ in which a color $i$ is turned on, we have $|S_j| \le 8 \eps^{-10\eps n} |S_{j+1}|$ and $\omega_i(S_{j+1})<10\eps n$, whereas at each step $j$ where we pass to a neighborhood of a color $i$ that is already on, we have $|S_j| \le 2 |S_{j+1}|$ and $\omega_i(S_{j+1}) \le \omega_i(S_j) - 1$. As at most $3(r-s)$ colors ever turn on in total, the latter type of step occurs at most $3(r-s)\cdot 10\eps n$ times. But, as $|S_0| = N \geq 10n \cdot 2^{3(r-s) \cdot 10\eps n}\left(8 \eps^{-10\eps n}\right)^{3(r-s)}$, this is a contradiction. 
\end{proof}

\section{More on set-coloring Ramsey numbers and codes} \label{sec:codes}

In this section, we explore another connection between set-coloring Ramsey numbers and error-correcting codes. 
Up to this point, we have focused on bounding $R(n; r, s)$ when $r$ and $s$ are fixed and $n$ is large. However, it is also interesting to look at the regime where $n$ is fixed. For example, when $n=3$ and $s=1$, $R(3;r,1)$ is simply the multicolor Ramsey number of a triangle.

We first observe that $R(n;r,s)=n$ if $(r-s){n \choose 2}<r$. Indeed, this is equivalent to $r(\binom{n}{2} - 1) < s \binom{n}{2}$, so, since each edge receives $s$ colors, 
some one of the $r$ colors must appear on  all $\binom{n}{2}$ edges. 
More generally, we have the following simple result, which connects set-coloring Ramsey numbers and Tur\'an numbers. Recall that, for a graph $H$ and a positive integer $N$, the Tur\'an number $\ex(N,H)$ is the maximum number of edges in an $H$-free graph on $N$ vertices.

\begin{prop}
If $N,n$ and $r>s$ are positive integers for which $\frac{s}{r} > \ex(N,K_n)/\binom{N}{2}$, 
then $R(n;r,s) \le N$. 
\end{prop}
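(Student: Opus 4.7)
The plan is a one-shot averaging argument: I want to show that in any $(r,s)$-coloring of $K_N$, some single color class is dense enough to force a $K_n$ by the definition of the Tur\'an number.

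Concretely, let $\chi: E(K_N) \to \binom{[r]}{s}$ be any $(r,s)$-coloring, and for each color $i \in [r]$ define $G_i$ to be the graph on $[N]$ whose edge set consists of all $e \in E(K_N)$ with $i \in \chi(e)$. Since every edge of $K_N$ belongs to exactly $s$ of the color sets, a simple double count gives
\[
\sum_{i=1}^{r} e(G_i) \;=\; s \binom{N}{2}.
\]
By averaging over $i$, some color $i^*$ satisfies $e(G_{i^*}) \geq \frac{s}{r}\binom{N}{2}$. The hypothesis $\frac{s}{r} > \ex(N, K_n)/\binom{N}{2}$ then yields $e(G_{i^*}) > \ex(N, K_n)$, so by the definition of the Tur\'an number $G_{i^*}$ must contain a copy of $K_n$. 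The vertex set of this $K_n$ is a monochromatic clique of size $n$ in color $i^*$, showing $R(n;r,s) \leq N$.

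There is no real obstacle here; the argument is just the natural generalization of the trivial $R(n;r,1) \leq N$ whenever $1/r$ exceeds the Tur\'an density of $K_n$ on $N$ vertices, with the factor $s$ coming in because each edge is now counted $s$ times in the sum of color class sizes. The only thing worth being careful about is the strict versus non-strict inequality: we use the strict hypothesis $s/r > \ex(N, K_n)/\binom{N}{2}$ precisely to guarantee $e(G_{i^*}) > \ex(N, K_n)$ and hence the existence of a $K_n$ in $G_{i^*}$.
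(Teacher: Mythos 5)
Your proof is correct and is essentially the same double-counting/averaging argument as in the paper; the paper just phrases it contrapositively (assume no monochromatic $K_n$ and derive $\ex(N,K_n)r \geq s\binom{N}{2}$), while you argue directly that some color class exceeds the Tur\'an threshold.
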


\begin{proof}
Indeed, suppose $N < R(n;r,s)$, so there exists an $(r,s)$-coloring $\chi$ of $K_N$ with no monochromatic $K_n$. We bound the number of pairs $(e,i) \in E(K_N) \times [r]$ consisting of an edge $e$ and a color $i\in \chi(e)$ on that edge in two ways. First, as $|\chi(e)|=s$ for each $e$, the total number of such pairs is ${N \choose 2}s$. Second, as each color class is $K_n$-free, there are at most $\ex(N,K_n)$ edges of each color, giving an upper bound of $\ex(N,K_n)r$ on the number of pairs. Hence, $\ex(N,K_n)r \geq {N \choose 2}s$, which implies the proposition. 
\end{proof}

As a simple consequence of this proposition, we see that if $n \geq 3$ and $\eps > 0$ are fixed and $\frac{s}{r} \geq 1-\frac{1}{n-1} + \eps$, 
then $R(n;r,s)$ is at most a constant depending only on $n$ and $\eps$. On the other hand, for $n$ fixed and $\frac{s}{r} \leq 1-\frac{1}{n-1} - \eps$, we can use  Theorem~\ref{codebound} below to show that $R(n;r,s)$ grows at least exponentially in $r$. Thus, for $n$ fixed, there is a threshold 
for $s/r$ at $1-\frac{1}{n-1}$ where the set-coloring Ramsey number goes from being absolutely bounded to growing exponentially in $r$. 

Instead of looking at $R(n; r,s)$ directly, it will be useful to consider the variant $R'(n;r,s)$, defined to be the minimum $N$ such that in every $(r,s)$-coloring of $K_N$ there is a color $i$ for which the edges in color $i$ form a graph with chromatic number at least $n$. We clearly have $R(n;r,s) \geq R'(n;r,s)$ and we believe that this estimate
should be close to an equality when $s/r$ is close to the Tur\'an density $1-\frac{1}{n-1}$. Indeed, for the special case where $s = r-1$, this is confirmed in~\cite{AEGM}. The following result now establishes a tight connection 
between this variation of set-coloring Ramsey numbers and the size of error-correcting codes. 

\begin{thm} \label{codebound}
For all positive integers $n\ge 2$ and $r>s$,  $R'(n;r,s)=A_{n-1}(r,s)+1$. 
That is, the maximum $N$ for which there exists an $(r,s)$-coloring of $K_N$ such that each color class is $(n-1)$-partite is equal to the maximum size of a $q$-ary code with $q=n-1$ of length $r$ and distance $s$.  
\end{thm}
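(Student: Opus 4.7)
The plan is to establish the equality by exhibiting a direct correspondence between $(r,s)$-colorings of $K_N$ whose color classes are all $(n-1)$-partite and $(n-1)$-ary codes of length $r$ and distance at least $s$. Since $R'(n;r,s)$ is the smallest $N$ forcing some color class to have chromatic number $\ge n$, the largest $N$ admitting an $(r,s)$-coloring in which every color class is $(n-1)$-partite is $R'(n;r,s)-1$, so the task reduces to showing that this maximum equals $A_{n-1}(r,s)$.

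For the upper bound $R'(n;r,s)-1 \le A_{n-1}(r,s)$, I would start with any $(r,s)$-coloring $\chi$ of $K_N$ whose color classes $G_1,\dots,G_r$ are each $(n-1)$-partite. Fix a proper $(n-1)$-coloring $f_i\colon V(K_N)\to [n-1]$ of $G_i$ for each $i\in[r]$, and define $F\colon V(K_N)\to[n-1]^r$ by $F(v)=(f_1(v),\dots,f_r(v))$. For distinct vertices $u,v$, every $i\in\chi(uv)$ contributes an edge of $G_i$, forcing $f_i(u)\ne f_i(v)$; hence $F(u)$ and $F(v)$ disagree in at least $|\chi(uv)|=s$ coordinates. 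In particular $F$ is injective, and its image is a code in $[n-1]^r$ of size $N$ and distance at least $s$, yielding $N\le A_{n-1}(r,s)$.

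For the lower bound $R'(n;r,s)-1\ge A_{n-1}(r,s)$, I would reverse the construction. Take a code $C\subset[n-1]^r$ with $|C|=A_{n-1}(r,s)$ and minimum distance at least $s$. Build an $(r,s)$-coloring $\chi$ on the complete graph with vertex set $C$ by selecting, for each edge $uv$, an arbitrary $s$-subset $\chi(uv)\subseteq\{i\in[r]:u_i\ne v_i\}$; this is possible exactly because $u$ and $v$ differ in at least $s$ coordinates. For any color $i\in[r]$, every edge in the class $G_i$ joins codewords differing in coordinate $i$, so the partition of $C$ according to the $i$-th coordinate is a proper $(n-1)$-coloring of $G_i$. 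Thus every color class is $(n-1)$-partite, so $R'(n;r,s)-1\ge |C|=A_{n-1}(r,s)$.

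Combining the two inequalities gives $R'(n;r,s)=A_{n-1}(r,s)+1$. There is no substantive obstacle: the argument is essentially a tautological translation between proper $(n-1)$-colorings of the color classes and coordinate functions of a codeword assignment. The only thing to be careful about is the off-by-one convention (the maximum $N$ with a good coloring versus the minimum $N$ forcing a bad one) and the observation that injectivity of $F$ is automatic from $s\ge 1$.
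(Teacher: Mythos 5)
Your argument is correct and follows essentially the same two-step correspondence as the paper's proof: one direction encodes each vertex by the tuple of its parts across the $r$ chosen $(n-1)$-colorings to obtain a code of distance at least $s$, and the other direction restricts the coordinate-difference coloring of $[n-1]^r$ to a distance-$s$ code. The only cosmetic difference is that you explicitly trim each edge's color set to exactly $s$ colors, whereas the paper leaves the sets of size at least $s$; both yield the same conclusion.
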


\begin{proof}
Consider a set-coloring of the edges of the complete graph on $[q]^r$, where a pair of vertices have color $i$ on their edge if they differ in coordinate $i$. Note that each color class is $q$-partite. Consider the induced subgraph on a code of size $A_q(r,s)$ of distance $s$. Each edge in this coloring of the complete graph on $A_q(r,s)$ vertices receives a set of at least $s$ colors from a set of $r$ colors and each color class is $q$-partite. Therefore, $R'(q+1;r,s) > A_q(r,s)$. 

For the other direction, let $N=R'(n;r,s)-1$. By definition, there exists an $(r,s)$-coloring of $K_N$ such that each color class is $(n-1)$-partite. For each color $i$, pick a partition $V(K_N)=V_{i,1} \cup \cdots \cup V_{i,n-1}$ of the vertex set into $n-1$ independent sets in color $i$. To each vertex $v \in V(K_N)$, assign $v$ the codeword $x(v)=(x_1(v),\ldots,x_r(v)) \in [n-1]^r$, where $x_i(v) = j$ if $v \in V_{i,j}$. Observe that no two vertices can have equal codewords as they must differ in at least $s$ coordinates and so the mapping $x:V(K_N) \rightarrow [n-1]^r$ is injective and its image $x(V(K_N))$ is an $(n-1)$-ary code of length $r$ with distance at least $s$. By the definition of $A_q(r,s)$, we find that $R'(n;r,s)-1 = N \leq A_{q}(r,s)$, completing the proof. 
\end{proof}

If we pick two random elements of $(n-1)^r$, the expected number of coordinates in which they differ is $(1- \frac{1}{n-1})r$. Standard tail estimates then imply that, for  any fixed $\eps >  0$, the probability two random elements differ in fewer than $(1- \frac{1}{n-1} -  \eps)r$ coordinates is at most $2^{-c r}$, where $c$ depends only on $n$ and $\eps$. In particular, this implies that we can pick $2^{c r/2}$ elements of $(n-1)^r$ such that the distance between any pair is at least $(1- \frac{1}{n-1} - \eps)r$.  That is, for $s \leq (1- \frac{1}{n-1} - \eps)r$, $A_{n-1}(r,s) \geq 2^{c r/2}$, so, by Theorem~\ref{codebound}, we get the same lower bound for $R'(n; r, s)$ and, hence, $R(n; r, s)$ in this range, confirming our earlier claim. 

Suppose now that we have a set $V$ of $N$ vertices, with $N$ a multiple of $n-1$, and there are $r$ partitions $P_1,\ldots,P_r$ of $V$ into $n-1$ sets of equal size such that, for each pair $u,v$ of distinct vertices, there are $s$ of the partitions $P_i$ for which $u$ and $v$ are in different parts. In this situation, we can produce an $(r,s)$-coloring of the edges of the complete graph on vertex set $V$, where a pair of distinct vertices $u,v$ receives the set of colors $i$ for which $u$ and $v$ are in different parts in $P_i$. Note that, in this set-coloring, each color class forms a Tur\'an graph with $n-1$ parts and so has the maximum possible number of edges for a $K_n$-free graph on $N$ vertices. If such partitions $P_1,\ldots,P_r$ exist, it therefore follows that $R(n;r,s)=R'(n;r,s)=A_{n-1}(r,s)+1=N+1$. 

We can build such partitions $P_1,\ldots,P_r$ using finite geometry, generalizing a construction of Alon, Erd\H{o}s, Gunderson and Molloy~\cite{AEGM} for the $s = r-1$ case. 
If $\mathbb{F}_q$ is the finite field of order $q$ for some  prime power $q$, then the number of $k$-dimensional subspaces in the $d$-dimensional affine space $\mathbb{F}_q^d$ is given by the Gaussian binomial coefficient (or $q$-binomial coefficient) $${d \choose k}_q = \prod_{i=0}^{k-1} \frac{(q^{d-i}-1)}{(q^{k-i}-1)}.$$
If we set $V=\mathbb{F}_q^d$ and let $S_1,\ldots,S_r$ be the set of all $k$-dimensional subspaces of $\mathbb{F}_q^d$, so that $r={d \choose k}_q$, we can define $r$ partitions $P_1,\ldots,P_r$ of $V$, where $x,y \in V$ are in the same part in $P_i$ if and only if $y-x \in S_i$. As there are $q^{d-k}$ distinct translates of $S_i$, each corresponding to a part of $P_i$, we get that each $P_i$ has $q^{d-k}$ parts and, hence, we can set $n=q^{d-k}+1$. Observe that each pair of distinct vertices is in the same number of affine subspaces of dimension $k$. Since the probability that two distinct random vertices are in the same part in $P_i$ is $\frac{q^k-1}{q^d-1}$, it follows that there are $s$ partitions for which they are in different parts, where 
$$s=\left(1-\frac{q^k-1}{q^d-1}\right)r.$$ 
Putting all this together, we have the following proposition.  

\begin{prop}
Let $k < d$ be positive integers and $q$ be a prime power. If $n=q^{d-k}+1$, $r={d \choose k}_q$ and $s=\left(1-\frac{q^k-1}{q^d-1}\right)r$, then  
$$R(n;r,s)=R'(n;r,s)=A_{n-1}(r,s)+1=q^d+1.$$
\end{prop}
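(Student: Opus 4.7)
The plan is to confirm the displayed chain of equalities by combining the explicit finite-geometry construction outlined just before the statement with a matching upper bound. The lower bound is essentially already spelled out in the preceding discussion: take $V = \mathbb{F}_q^d$ and let $P_1,\ldots,P_r$ be the partitions of $V$ into cosets of the $r = \binom{d}{k}_q$ distinct $k$-dimensional subspaces. The only counting fact that really needs checking is that every nonzero $v \in \mathbb{F}_q^d$ lies in exactly $\binom{d-1}{k-1}_q$ of the $k$-dimensional subspaces (via the quotient $\mathbb{F}_q^d/\langle v\rangle \cong \mathbb{F}_q^{d-1}$, or by a simple double count), and
$$\binom{d-1}{k-1}_q \Big/ \binom{d}{k}_q = \frac{q^k - 1}{q^d - 1}.$$
Hence, for distinct $x, y \in V$, the set $\chi(xy) = \{i : x,y \text{ lie in different classes of } P_i\}$ has exactly $s$ elements, and each color class is the Tur\'an graph $T(q^d, n-1)$ (complete $(n-1)$-partite with equal parts of size $q^k$). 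In particular, no color class has chromatic number $\geq n$, so $R(n;r,s) \geq R'(n;r,s) \geq q^d + 1$, and Theorem~\ref{codebound} then yields $A_{n-1}(r,s) \geq q^d$.

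For the matching upper bound I would invoke the earlier Tur\'an-type proposition at $N = q^d + 1$ and $H = K_n$. Since $n-1 = q^{d-k}$ and $q^d + 1 = q^{d-k}\cdot q^k + 1$, the Tur\'an graph $T(q^d+1, n-1)$ has one part of size $q^k + 1$ and $q^{d-k}-1$ parts of size $q^k$. A short direct computation reduces the desired density inequality $s/r > \ex(q^d+1, K_n)/\binom{q^d+1}{2}$ to $(q^k - 1)q^d < q^k(q^d - 1)$, which is equivalent to $q^d > q^k$ and therefore holds because $d > k$. This gives $R(n;r,s) \leq q^d + 1$, and combining with the lower bound collapses
$$R(n;r,s) \geq R'(n;r,s) = A_{n-1}(r,s) + 1 \geq q^d + 1$$
to equality.

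The main obstacle is arithmetic rather than conceptual: one must verify that the Tur\'an density at $N = q^d + 1$ is strictly below $s/r$, which is a priori delicate because $s/r = 1 - (q^k - 1)/(q^d - 1)$ lies only marginally above the limiting Tur\'an density $1 - 1/q^{d-k}$. An alternative route to the upper bound is the Plotkin bound: since $s > (1 - 1/(n-1))r$, it yields $A_{n-1}(r,s) \leq s/\bigl(s - (1 - 1/(n-1))r\bigr)$, and after using $q^d - q^k = q^k(q^{d-k} - 1)$ this fraction simplifies to exactly $q^d$; that gives $R' \leq q^d + 1$ through Theorem~\ref{codebound}, but one still needs the Tur\'an proposition to upgrade from $R'$ to $R$.
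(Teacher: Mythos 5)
Your proposal is correct and follows essentially the same route as the paper: the finite-geometry coloring (partitions by cosets of $k$-dimensional subspaces of $\mathbb{F}_q^d$) establishes $R(n;r,s)\geq R'(n;r,s)=A_{n-1}(r,s)+1\geq q^d+1$, and the Tur\'an-type proposition applied at $N=q^d+1$ supplies the matching upper bound. You go a bit further than the paper's terse remark that ``each color class is an extremal Tur\'an graph'' by actually verifying the strict density inequality $s/r>\ex(q^d+1,K_n)/\binom{q^d+1}{2}$ (equivalent to $q^k<q^d$), which is the small arithmetic step the paper leaves implicit.
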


While we expect that the bound $R(n;r,s) \geq R'(n;r,s) =A_{n-1}(r,s)+1$ is close to sharp for $s/r$ close to $1 - \frac{1}{n-1}$, it can be far from the truth in general. Indeed, the bound is only polynomial in $n$, as, by the Singleton bound~\cite{Sing}, $A_{n-1}(r,s) \leq (n-1)^{r-s+1}$, but in some other regimes we know that the set-coloring Ramsey number grows exponentially in $n$.

\section{Hypergraph set coloring} \label{sec:hyper}

One can also study the analogous problem for hypergraphs. Let us define $K_N^{(k)}$ to be the complete $k$-uniform hypergraph on $N$ vertices and an $(r,s)$-coloring of $K_N^{(k)}$ to be a function assignment $\chi:E(K_N^{(k)})\rightarrow \binom{[r]}{s}$. For positive integers $n, k, r, s$ with $n > k$ and $r > s$, the $k$-uniform set-coloring Ramsey number $R_k(n;r,s)$ is then the minimum positive integer $N$ such that in every $(r,s)$-coloring of $K_N^{(k)}$ there is a monochromatic $n$-clique, i.e., a set of $n$ vertices such that all $\binom{n}{k}$ edges in this set include a common color. A simple variant of an old result of Erd\H{o}s and Rado~\cite{ER} gives the following upper bound.

\begin{thm}  \label{thm:hyp2}
For any positive integers $n, k, r, s$ with $n > k\geq 3$ and $r > s$,
$$R_{k}(n; r,s)  \le  \binom{r}{s}^{{R_{k-1}(n-1; r,s) \choose k-1}}.$$
\end{thm}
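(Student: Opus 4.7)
The plan is to adapt the classical Erd\H{o}s--Rado stepping-up construction to the set-coloring setting, reducing the $k$-uniform problem to a $(k-1)$-uniform one. Set $M := R_{k-1}(n-1;r,s)$ and let $N := \binom{r}{s}^{\binom{M}{k-1}}$, and let $\chi:E(K_N^{(k)}) \to \binom{[r]}{s}$ be an arbitrary $(r,s)$-coloring. I will iteratively build a sequence of vertices $v_1,\ldots,v_M$ together with nested sets $V(K_N)=S_0 \supseteq S_1 \supseteq \cdots \supseteq S_M$ satisfying the canonicity invariant that for every $(k-1)$-subset $T \subseteq \{v_1,\ldots,v_i\}$ and every $v \in S_i$, the value $\chi(T \cup \{v\}) \in \binom{[r]}{s}$ depends only on $T$; denote this common value by $c(T)$.

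At stage $i+1$, pick any $v_{i+1} \in S_i$. By the inductive invariant, the only $(k-1)$-subsets of $\{v_1,\ldots,v_{i+1}\}$ whose canonicity is not automatic are the $\binom{i}{k-2}$ new subsets containing $v_{i+1}$. For each such $T$, the map $v \mapsto \chi(T \cup \{v\})$ partitions $S_i \setminus \{v_{i+1}\}$ into at most $\binom{r}{s}$ classes, so a pigeonhole over all $\binom{i}{k-2}$ new subsets simultaneously yields a refinement $S_{i+1} \subseteq S_i \setminus \{v_{i+1}\}$ of size at least $(|S_i|-1)/\binom{r}{s}^{\binom{i}{k-2}}$ on which $\chi(T \cup \cdot)$ is constant for every new $T$, extending $c$ consistently. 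The hockey-stick identity $\sum_{i=0}^{M-1}\binom{i}{k-2} = \binom{M}{k-1}$ makes the total multiplicative loss over all $M$ stages equal to $\binom{r}{s}^{\binom{M}{k-1}}$, so $N = \binom{r}{s}^{\binom{M}{k-1}}$ vertices suffice to guarantee $|S_M| \geq 1$ (the geometric tail of ``$-1$'' losses is easily absorbed, as in the standard Erd\H{o}s--Rado count).

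To finish, note that $c$ is itself an $(r,s)$-coloring of the complete $(k-1)$-uniform hypergraph on $\{v_1,\ldots,v_M\}$. By the definition of $M = R_{k-1}(n-1;r,s)$, there exist indices $j_1 < \cdots < j_{n-1}$ and a color $\alpha \in [r]$ with $\alpha \in c(T)$ for every $(k-1)$-subset $T$ of $W := \{v_{j_1},\ldots,v_{j_{n-1}}\}$. Pick any $u \in S_M$ and consider $W \cup \{u\}$. Any $k$-subset $S \subseteq W$ can be written as $T \cup \{v_{j_\ell}\}$ where $j_\ell = \max\{j : v_j \in S\}$ and $T = S \setminus \{v_{j_\ell}\}$; since $v_{j_\ell} \in S_{j_\ell - 1} \subseteq S_{\max T}$, the invariant yields $\chi(S) = c(T) \ni \alpha$. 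Similarly, any $k$-subset of the form $T \cup \{u\}$ with $T \subseteq W$ of size $k-1$ satisfies $u \in S_M \subseteq S_{\max T}$, so $\chi(T \cup \{u\}) = c(T) \ni \alpha$. Therefore $W \cup \{u\}$ is a monochromatic $K_n^{(k)}$ of color $\alpha$, as required. The main obstacle is purely bookkeeping: verifying that the size recurrence truly accommodates the claimed $N$, which is a routine adaptation of the standard Erd\H{o}s--Rado calculation with the $r$-ary pigeonhole replaced by a $\binom{r}{s}$-ary one.
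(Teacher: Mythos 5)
Your proof is correct and is precisely the Erd\H{o}s--Rado canonicity argument that the paper alludes to (the paper states Theorem~\ref{thm:hyp2} with only the remark that it follows from ``a simple variant'' of Erd\H{o}s--Rado, and does not spell out the details). The inductive invariant, the pigeonhole with base $\binom{r}{s}$ rather than $r$, the hockey-stick count, and the final verification that $W \cup \{u\}$ is monochromatic are all as intended. The one step you wave at --- that the additive ``$-1$'' losses are absorbed so that $N = \binom{r}{s}^{\binom{M}{k-1}}$ really does guarantee $|S_M| \geq 1$ --- does check out, but it is worth noting that it uses $M = R_{k-1}(n-1;r,s) \geq n-1 \geq k$ together with integrality of $|S_M|$ at the last stage: tracking the error term $f(i)$ defined by $f(0)=0$, $f(i+1) = (f(i)+1)/\binom{r}{s}^{\binom{i}{k-2}}$ gives $|S_i| \geq \binom{r}{s}^{\binom{M}{k-1}-\binom{i}{k-1}} - f(i)$, and one verifies $f(M) < 1$ once $M \geq k$, so $|S_M| > 0$. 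This is exactly the routine calculation you call it, but the reliance on $M \geq k$ and on rounding is where the slack actually comes from, so it would be cleaner to record it explicitly rather than cite it as folklore.
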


In the other direction, there is a standard technique, 
the stepping-up lemma (see, for example,~\cite{GRS}), that allows one to take a bound for the $r$-color Ramsey number of graphs and lift it to a bound for the $2r$-color Ramsey number of $3$-uniform hypergraphs. This technique also applies in the set-coloring context and gives the following result.  We include its proof as a warm-up.

\begin{thm} \label{thm:hyp1}
If $R_2(n; r, s) > N$ for some positive integers $N, n, r, s$, then $R_3(n+1; 2r, s) > 2^N$.
\end{thm}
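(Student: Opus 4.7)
The plan is to apply the classical stepping-up construction, adapted to the set-coloring setting by doubling the palette via a single extra bit. Given an $(r,s)$-coloring $\chi\colon E(K_N)\to\binom{[r]}{s}$ with no monochromatic $K_n$, identify the vertex set of $K_{2^N}^{(3)}$ with $\{0,1\}^N$ and the palette $[2r]$ with $[r]\times\{0,1\}$. For distinct $x,y\in\{0,1\}^N$, set $\delta(x,y)=\max\{i:x_i\ne y_i\}$ and take the order $x<y$ iff $x_{\delta(x,y)}=0$. For any triple $\{x,y,z\}$ with $x<y<z$, one checks that $\delta(x,z)=\max(\delta(x,y),\delta(y,z))$, so the three pairwise deltas take just two values $\delta^-<\delta^+$ with exactly two of the three pairs attaining $\delta^+$; let $w$ be the ``odd'' vertex lying in both $\delta^+$-pairs. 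Define
\[
\chi'(\{x,y,z\}) := \chi(\{\delta^-,\delta^+\})\times\{w_{\delta^+}\}\subseteq[r]\times\{0,1\},
\]
where $w_{\delta^+}$ denotes the $\delta^+$-th coordinate of $w$; this is a set of $s$ colors in $[2r]$, so $\chi'$ is a $(2r,s)$-coloring of $K_{2^N}^{(3)}$.

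Suppose for contradiction that $\chi'$ admits a monochromatic $K_{n+1}$ on $x_1<x_2<\cdots<x_{n+1}$ with common color $(c,\beta)\in[r]\times\{0,1\}$, and write $\delta_i=\delta(x_i,x_{i+1})$. First, $\delta_i\ne\delta_{i+1}$ for every $i$: otherwise the coordinate of $x_{i+1}$ at that common position would have to equal both $1$ (since $x_i<x_{i+1}$) and $0$ (since $x_{i+1}<x_{i+2}$). Applying the construction to the consecutive triple $\{x_i,x_{i+1},x_{i+2}\}$, a short case analysis gives: if $\delta_i>\delta_{i+1}$ then $w=x_i$ and $w_{\delta^+}=0$, while if $\delta_i<\delta_{i+1}$ then $w=x_{i+2}$ and $w_{\delta^+}=1$. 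Since $\beta$ is fixed across all triples of the clique, the comparison $\delta_i$ vs.\ $\delta_{i+1}$ must go the same way for every $i$, so the sequence $(\delta_1,\ldots,\delta_n)$ is strictly monotone.

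Assume without loss of generality that $\delta_1<\delta_2<\cdots<\delta_n$. For any $1\le i<j<k\le n+1$, monotonicity gives $\delta(x_i,x_j)=\delta_{j-1}$ and $\delta(x_j,x_k)=\delta_{k-1}$, so the triple has ``graph edge'' $\{\delta_{j-1},\delta_{k-1}\}$ and hence $c\in\chi(\{\delta_{j-1},\delta_{k-1}\})$. As $(j,k)$ ranges over pairs with $2\le j<k\le n+1$ (with $i$ chosen freely below $j$), the pair $(j-1,k-1)$ covers every pair of distinct elements of $[n]$, so the $n$ distinct vertices $\delta_1,\ldots,\delta_n\in[N]$ form a monochromatic $K_n$ in $\chi$ of color $c$, contradicting the choice of $\chi$. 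The main technical obstacle lies in the second paragraph: the extra bit is precisely what forces the a priori irregular $\delta$-sequence to be monotone, and without doubling the palette the analogous construction would permit local extrema in $(\delta_1,\ldots,\delta_n)$, causing the final extraction step to fail.
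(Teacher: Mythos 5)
Your proof is correct and takes essentially the same route as the paper's: it is the standard stepping-up construction using the function $\delta(x,y)=\max\{i:x_i\neq y_i\}$, with the palette doubled to record whether $\delta(x,y)<\delta(y,z)$ or $\delta(x,y)>\delta(y,z)$, from which monotonicity of the $\delta$-sequence of a would-be monochromatic clique follows, yielding a monochromatic $K_n$ in the base coloring. The only cosmetic differences are that you identify $[2r]$ with $[r]\times\{0,1\}$ and phrase the extra bit via the ``odd vertex'' $w_{\delta^+}$ rather than an additive shift by $r$, and you spell out the final extraction step a bit more explicitly than the paper does.
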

 
 \begin{proof}
Suppose that we have an $(r,s)$-coloring $\chi$ of the complete graph on vertex set $U = \{0,1,\ldots, N-1\}$ with no monochromatic $n$-clique. We will produce a $(2r,s)$-coloring $\phi$ of the complete $3$-uniform hypergraph on vertex set $V = \{0,1,\ldots, 2^N - 1\}$ with no monochromatic $(n+1)$-clique.

For $v \in V$, we write $v = \sum_{i =0}^{N-1}v(i)2^i$, where $v(i) \in \{0,1\}$.  Then, for any $u,v \in V$ with $u \neq v$, let $\delta(u,v)$ denote the largest $i\in \{0,1,\ldots ,N-1\}$ such that $u(i) \neq v(i)$.    It is easy to see that we have the following properties:

\medskip

\textbf{Property I}: For every triple $u < v < w$, $\delta(u,v) \neq \delta(v,w)$.

\textbf{Property II}: For $v_1 < \cdots < v_r$, $\delta(v_1,v_r) = \max_{1\leq j \leq r-1} \delta(v_j,v_{j+1})$.

\medskip

\noindent Given a triple $f = \{u,v,w\}$, where $u < v < w \in V$, set $\phi(f) = \chi(\delta(u,v),\delta(v,w)) \in {[r]\choose s}$ if $\delta(u,v) < \delta(v,w)$ and set $\phi(f) = \{x + r: x \in \chi(\delta(u,v),\delta(v,w))\}$ if $\delta(u,v) > \delta(v,w)$.  Hence,  $\phi(f) \in {[2r] \choose s}$ for every edge $f$.  For the sake of contradiction, suppose that there are vertices $v_1 < \cdots < v_{n + 1} \in V$ such that there is a common color in the coloring $\phi$ on the triples spanned by these vertices.  Let $\delta_i = \delta(v_i,v_{i + 1})$.  Since 
$\{x: x \in \chi(\delta_i,\delta_{i + 1}) \mbox{ for some } 1 \leq i \leq n\}$ and $\{x + r: x \in \chi(\delta_i,\delta_{i + 1}) \mbox{ for some } 1 \leq i \leq n\}$
are disjoint, the sequence $\{\delta_i\}$ must be monotone (i.e., $\delta_1   < \cdots < \delta_n$ or $\delta_1   >\cdots > \delta_n$).   By Properties I and II, this easily implies that there is a common color in the coloring $\chi$ on the set $\{\delta_1,\ldots, \delta_n\} \subset U$, which is a contradiction.\end{proof}

  Together with Theorem~\ref{corlow}, this implies that $R_3(n; r, s)$ is at least double exponential in $n$ provided $r \geq 2s + 2$.  For $k \geq 3$, it becomes a little easier to step-up from $k$-uniform to $(k+1)$-uniform hypergraphs. Following the stepping-up process described above, every $(k+1)$-tuple $\{v_1,\ldots, v_{k+1}\}$ with $v_1 < v_2 < \dots < v_{k+1}$ and $v_1,\ldots, v_{k+1} \in \{0,1,\ldots, 2^N - 1\}$ has a corresponding $k$-tuple $\{\delta_1,\ldots, \delta_k\}$ with $\delta_1,\ldots, \delta_k \in \{0,1,\ldots, N-1\}$, where $\delta_i = \delta(v_i,v_{i + 1})$. Because $k \geq 3$, we can now distinguish between whether the sequence $\{\delta_i\}$ is monotone or non-monotone.  If $\{\delta_i\}$ is monotone, we assign the $(k+1)$-tuple $\{v_1,\ldots, v_{k + 1}\}$ the same color as the $k$-tuple $\{\delta_1,\ldots, \delta_k\}$.  If $\{\delta_i\}$ is non-monotone, we distinguish between whether the first non-monotone triple in the sequence $\{\delta_i\}$ is a local maximum or a local minimum (i.e., $\delta_i < \delta_{i + 1} > \delta_{i + 2}$ or $\delta_j > \delta_{j + 1} < \delta_{j + 2}$), in the first case giving the corresponding $(k+1)$-tuple one particular fixed set of $s$ colors and in the second case a totally disjoint fixed set of $s$ colors. In particular, both cannot occur in a monochromatic clique. 
  For this to work, there must be at least $2s$ colors overall, that is, we need $r \geq 2s$. 
  A careful analysis now gives the following result.

\begin{thm} \label{thm:stepup3}
Fix  $k \ge 3$. If $1 \le s \le r/2$ and $R_k(n; r, s) > N>0$ for some positive integers $N, n, r, s$, then $R_{k+1}(2n-1; r, s) > 2^N$.
\end{thm}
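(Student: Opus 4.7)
The plan is to execute the stepping-up construction sketched in the paragraph preceding the theorem, whose main technical step is showing that any monochromatic $(2n-1)$-clique under the lifted coloring forces the associated sequence of $\delta$-values to be unimodal, from which we can extract a monochromatic $n$-clique in the lower-uniformity coloring.

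Concretely, I fix an $(r,s)$-coloring $\chi$ of $K_N^{(k)}$ on $U = \{0, 1, \ldots, N-1\}$ with no monochromatic $n$-clique (available by hypothesis), and use the $\delta$-function from the proof of Theorem~\ref{thm:hyp1} to build an $(r,s)$-coloring $\phi$ of $K_{2^N}^{(k+1)}$ on $V = \{0, 1, \ldots, 2^N - 1\}$. For any $(k+1)$-subset $\{v_1 < \cdots < v_{k+1}\}$, with $\delta_j = \delta(v_j, v_{j+1})$, $\phi$ assigns $\chi(\{\delta_1, \ldots, \delta_k\})$ when the sequence $(\delta_1, \ldots, \delta_k)$ is strictly monotone; otherwise, I locate the smallest $j$ for which $(\delta_j, \delta_{j+1}, \delta_{j+2})$ is non-monotone and assign a fixed $A \in \binom{[r]}{s}$ if this triple is a peak, or a fixed disjoint $B \in \binom{[r]}{s}$ if it is a valley. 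Such disjoint $A,B$ exist because $r \geq 2s$.

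Suppose for contradiction that $v_1 < \cdots < v_{2n-1}$ form a monochromatic $(2n-1)$-clique under $\phi$ with common color $c$, and set $\delta_i = \delta(v_i, v_{i+1})$ for $1 \leq i \leq 2n-2$. Since $A \cap B = \emptyset$, applying $\phi$ to $(k+1)$-subsets of consecutive clique vertices rules out having both a peak and a valley among consecutive triples of $\delta_1, \ldots, \delta_{2n-2}$. Combined with Property~I (consecutive $\delta_i$'s differ), this forces the $\delta$-sequence to be unimodal: strictly monotone, strictly up-then-down, or strictly down-then-up. Its two monotone pieces together span $2n-1$ positions (the turning point is common to both), so by pigeonhole one piece $\delta_a, \ldots, \delta_b$ has length $b - a + 1 \geq n$.

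Finally, restrict to the $b - a + 2 \geq n + 1$ clique vertices $v_a, v_{a+1}, \ldots, v_{b+1}$. By Property~II, every $(k+1)$-subset of these vertices has a strictly monotone $\delta$-sequence, and is therefore colored by $\phi$ as $\chi$ of the corresponding $k$-subset of $\{\delta_a, \ldots, \delta_b\}$. Because every $k$-subset of $\{\delta_a, \ldots, \delta_b\}$ arises this way (by appropriately choosing the minimum index of the $(k+1)$-subset), $\chi$ of every such $k$-subset contains $c$, so $\{\delta_a, \ldots, \delta_b\}$ is a monochromatic clique of size at least $n$ in $\chi$, contradicting $R_k(n; r, s) > N$. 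The main obstacle is the unimodality step: one must carefully combine $A \cap B = \emptyset$ with Property~I to rule out both peaks and valleys and then see that the remaining cases are the three unimodal patterns; the threshold $2n-1$ is tight here because the pigeonhole requires the two monotone pieces to total at least $2n-1$.
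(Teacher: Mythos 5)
The paper only sketches the construction (``A careful analysis now gives the following result''), so there is no written-out proof to compare against; your proposal is the natural fill-in, and the construction, the extraction of a monochromatic $K_n^{(k)}$ in $\chi$ from a long monotone run, and the threshold $2n-1$ are all exactly right. However, the pivotal step -- that a monochromatic clique forces the $\delta$-sequence to be \emph{unimodal} -- has a gap for $k \ge 4$.

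Your argument is: $A \cap B = \emptyset$ rules out having both a peak-triple and a valley-triple in $\delta_1,\dots,\delta_{2n-2}$, because each such triple would be the first non-monotone triple of some $(k+1)$-subset of consecutive clique vertices, forcing $A$ and $B$ to both appear. But for a triple at position $i$ to be the \emph{first} non-monotone triple of $\{v_a,\dots,v_{a+k}\}$, you need $a > (\text{position of the preceding non-monotone triple})$ and $a \ge i-k+2$ and $a+k \le 2n-1$. If $p$ and $q$ are the first and second non-monotone triples (necessarily a peak and a valley in some order) and $p > 2n-2-k$, there is no valid $a$ for the triple at $q$, and one can check that non-consecutive $(k+1)$-subsets do not help either: any subset reaching $\delta_q,\delta_{q+1},\delta_{q+2}$ passes over the hump at $p$, which surfaces as the first non-monotone triple. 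A concrete instance: $k=4$, $n=5$, $\delta = (1,2,3,4,5,7,6,8)$ has a peak at $p=5$ and a valley at $q=6$, yet every non-monotone $5$-subset is colored $A$. (This window of indices near the end is empty when $k = 3$, which is why your argument is airtight in that case, and why the paper's separate Theorem~\ref{thm:stepup4} exists for the $k=3$ step-up with the weaker hypothesis $s \le 2r/3$.)

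The gap is easy to close, and the fix stays entirely within your framework. In the problematic case $p > 2n-2-k$, the minimality of $p$ means $\delta_1,\dots,\delta_{p+1}$ is strictly monotone of length $p+1 \ge 2n-k$. Since $n > k$, this is $\ge n+1 > n$, so you already have a monotone run of length $\ge n$ and can jump directly to your final paragraph without ever invoking unimodality. So the cleaner statement of the middle step is: either some consecutive $(k+1)$-subset sees the peak and another sees the valley (contradiction via $A \cap B = \emptyset$), or the initial monotone prefix alone has length $\ge n$. As written, though, the claim ``this forces the $\delta$-sequence to be unimodal'' does not follow from what precedes it when $k \ge 4$, and the pigeonhole that follows relies on it.
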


Thus, unlike for ordinary Ramsey numbers, where a double-exponential lower bound for $R_3(n; r, 1)$ automatically implies a triple-exponential lower bound for $R_4(n; r, 1)$, it is not always the case that a double-exponential lower bound for $R_3(n; r, s)$ implies a triple-exponential lower bound for $R_4(n; r, s)$.  Instead, Theorem~\ref{thm:stepup3} only says that we can apply the stepping-up method as long as $s \le r/2$. 
The next theorem shows that, provided we are willing to pay a small price in the top exponent, 
we can actually go as far as $s \le 2r/3$. 

\begin{thm} \label{thm:stepup4}
If $1\le s \le 2r/3$ and $R_3(n; r, s) > N>0$ for some positive integers $N, n, r, s$,  then $R_{4}(2n^2; r, s) > 2^N$.
\end{thm}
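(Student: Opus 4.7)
The plan is to emulate the stepping-up construction in the proof of Theorem~\ref{thm:stepup3}, replacing the two disjoint color sets used there with three $s$-element subsets $A_1, A_2, A_3 \subset [r]$ satisfying $A_1 \cap A_2 \cap A_3 = \emptyset$. Such three sets exist precisely because $s \le 2r/3$ is equivalent to $3(r-s) \ge r$, which allows the three complements (each of size $r-s$) to cover $[r]$. First I would take an $(r,s)$-coloring $\chi$ of $K_N^{(3)}$ with no monochromatic $n$-clique, which exists since $N < R_3(n;r,s)$, set $V = \{0,1,\ldots,2^N-1\}$, and define $\delta$ as in the proof of Theorem~\ref{thm:hyp1}. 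Then I would define an $(r,s)$-coloring $\phi$ of $K_{2^N}^{(4)}$ whose value on a 4-tuple $v_1<v_2<v_3<v_4$ depends only on the pattern of $(\delta_1,\delta_2,\delta_3)$, with $\delta_i = \delta(v_i,v_{i+1})$: strictly monotone patterns receive the color set $\chi(\{\delta_1,\delta_2,\delta_3\})$, while the non-monotone patterns are split into three subtypes (namely local maximum and local minimum, one of them further refined by an auxiliary comparison such as $\delta_1$ versus $\delta_3$) that are assigned to $A_1$, $A_2$, and $A_3$ respectively.

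Suppose for contradiction that there is a monochromatic clique of size $2n^2$ in $\phi$ with color $c$. The key engine inherited from the proof of Theorem~\ref{thm:stepup3} is that, by Property~II, any consecutive monotone run of length $L$ in the $\delta$-sequence along the clique forces $\{\delta_i, \delta_{i+1}, \ldots, \delta_{i+L-1}\}$ to be a monochromatic $L$-clique of color $c$ in $\chi$. Since $\chi$ has no monochromatic $n$-clique, we may assume the $\delta$-sequence contains no monotone consecutive run of length $\ge n$.

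Because $A_1 \cap A_2 \cap A_3 = \emptyset$, the color $c$ lies outside at least one of the $A_i$, so one of the three non-monotone subtypes is absent from the clique. In the two cases where this absence is broad enough to forbid local maxima outright or local minima outright, the $\delta$-sequence is forced into a mountain or valley shape, so its longer monotone half has length at least $\lceil (2n^2-1)/2 \rceil \ge n$, contradicting the previous paragraph.

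The main obstacle is the remaining case, in which the absent subtype is too refined to rule out one side of the non-monotone dichotomy and the $\delta$-sequence can still zigzag with both local maxima and minima present. Here the plan is to spend the $2n^2$ budget via Erd\H{o}s--Szekeres: because consecutive monotone runs are short, at least $\Omega(n^2)$ positions of the $\delta$-sequence are peaks, and Erd\H{o}s--Szekeres then extracts a monotone subsequence of $n$ peak values. The key technical step is to convert this \emph{non-consecutive} monotone peak subsequence into a monotone \emph{consecutive} $\delta$-sequence on a sub-clique; I would do this by taking right-to-left maxes among the peaks (or left-to-right maxes, depending on the direction of monotonicity produced by Erd\H{o}s--Szekeres), so that each chosen peak dominates every intermediate $\delta$-value and, by Property~II, the pairwise $\delta$-distances between the chosen vertices are precisely the chosen peak values in strictly monotone order. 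Applying the Property~II argument from the monotone case one last time then yields a monochromatic $n$-clique in $\chi$, the desired contradiction. This Erd\H{o}s--Szekeres step is the reason the clique bound degrades from linear to quadratic in $n$, accounting for the $2n^2$ in the statement.
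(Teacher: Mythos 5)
Your coloring $\phi$ is essentially the paper's, and the goal of exhibiting a sub-clique on which the consecutive $\delta$-distances are strictly monotone is also the right one, but your proposed contradiction argument has genuine gaps. A minor one: in the natural coloring (one $A_i$ for local maxima, the other two splitting local minima by comparing the outer $\delta$-values), only one of the three subtypes is ``broad.'' The pigeonhole only forbids a single $A_i$, and forbidding one of the two refined local-minimum subtypes does not eliminate local minima or force a unimodal shape, so you land in the hard case two times out of three, not one. The serious problem is the Erd\H{o}s--Szekeres step. A sequence of length $2n^2-1$ all of whose consecutive monotone runs have length $<n$ has only $\Theta(n)$ peaks, not $\Omega(n^2)$: each hill spans fewer than $2n$ indices, so there are roughly $n$ hills. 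Erd\H{o}s--Szekeres applied to $\Theta(n)$ peak values produces a monotone subsequence of length only $\Theta(\sqrt{n})$, far short of $n$. And even if a length-$n$ monotone peak subsequence were available, the ``right-to-left maxes'' conversion is uncontrolled: a selected peak can be dominated by an unselected $\delta$-value lying between two selected peaks, so passing to records can collapse the subsequence to far fewer than $n$ terms.

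The paper sidesteps all of this with a different decomposition, and does not case-split on which $A_i$ is absent. Its key claim is that if $i_0$ is the index of the maximum $\delta_i$, then $\min\{i_0, 2n^2-i_0\} < n$: if not, it exhibits three explicit $4$-tuples $f_1, f_2, f_3$ that receive the color sets $C$, $A$, $B$ respectively, and then $A \cap B \cap C = \emptyset$ gives an immediate contradiction. With this claim in hand, the argument recursively deletes the short side of the current interval, records each argmax $i_0, i_1, \ldots, i_{2n}$ with a black/white label indicating which side was short, and pigeonholes to find $n+1$ of the same label. Those $n+1$ argmaxes are nested records by construction, so the consecutive $\delta$-distances of the corresponding vertices are strictly monotone and, by Properties I and II, produce a monochromatic $K_n^{(3)}$ in $\chi$, a contradiction. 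In short, the $2n$-step recursion is powered by the ``argmax is within $n$ of an endpoint'' claim rather than by Erd\H{o}s--Szekeres, and that claim is the ingredient your sketch is missing; trying to replace it with Erd\H{o}s--Szekeres on peaks falls short by a square root.
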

\begin{proof}

Suppose we have an $(r,s)$-coloring $\chi$ of the complete 3-uniform hypergraph with vertex set $U = \{0,1,\ldots, N-1\}$ with no monochromatic $n$-clique. We will produce an $(r,s)$-coloring $\phi$ of the complete 4-uniform hypergraph on vertex set $V = \{0,1,\ldots, 2^N - 1\}$ with no monochromatic $2n^2$-clique.  Just as in the proof of Theorem~\ref{thm:hyp1}, for $v \in V$, we write $v = \sum_{i =0}^{N-1}v(i)2^i$, where $v(i) \in \{0,1\}$.  For any $u,v \in V$ with $u \neq v$, let $\delta(u,v)$ denote the largest $i\in \{0,1,\ldots ,N-1\}$ such that $u(i) \neq v(i)$.  Once again, Properties I and II hold.

Choose three $s$-subsets $A, B, C$ of $[r]$ with the property that $A \cap B \cap C = \emptyset$. Since  $s \le 2r/3$, this is possible. We are now ready to define our coloring $\phi$. Given a 4-tuple $f=\{v_1, \ldots, v_4\}$ with $v_1<v_2<v_3<v_4 \in V$, let $\delta_i=\delta(v_i, v_{i+1})$ and color the edges as follows: 
\begin{itemize}
 \item   If the sequence $\{\delta_i\}$ is monotone, then $\phi(f) = \chi(\delta_1, \delta_2, \delta_3)$.
 \item If $\delta_3 > \delta_1 > \delta_2$ then $\phi(f) = A$.
 
 \item If $\delta_1>\delta_3>\delta_2$ then $\phi(f) = B$. 
 
 \item If $\delta_1<\delta_2> \delta_3$ then $\phi(f) = C$.
\end{itemize}

\noindent See Figure \ref{exfig}.

\begin{figure}[h]
\begin{subfigure}{.33\textwidth}
\centering
\begin{tikzpicture}[scale=.5]
\node at (1,0) {$\delta_3$};
\node at (2,0) {$\delta_1$};
\node at (3,0) {$\delta_2$};

\draw (-.5,-1) node {$v_1$:} (1,-1) node {0} (2,-1) node {0} (3,-1) node {*};
\draw (-.5,-2) node {$v_2$:} (1,-2) node {0} (2,-2) node {1} (3,-2) node {0};
\draw (-.5,-3) node {$v_3$:} (1,-3) node {0} (2,-3) node {1} (3,-3) node {1};
\draw (-.5,-4) node {$v_4$:} (1,-4) node {1} (2,-4) node {*} (3,-4) node {*};
\end{tikzpicture}
\caption{$\phi(f) = A$}
\end{subfigure} \begin{subfigure}{.33\textwidth}
\centering
\begin{tikzpicture}[scale=.5]
\node at (1,0) {$\delta_1$};
\node at (2,0) {$\delta_3$};
\node at (3,0) {$\delta_2$};

\draw (-.5,-1) node {$v_1$:} (1,-1) node {0} (2,-1) node {*} (3,-1) node {*};
\draw (-.5,-2) node {$v_2$:} (1,-2) node {1} (2,-2) node {0} (3,-2) node {0};
\draw (-.5,-3) node {$v_3$:} (1,-3) node {1} (2,-3) node {0} (3,-3) node {1};
\draw (-.5,-4) node {$v_4$:} (1,-4) node {1} (2,-4) node {1} (3,-4) node {*};
\end{tikzpicture}
\caption{$\phi(f) = B$}
\end{subfigure}
\begin{subfigure}{.33\textwidth}
\centering
\begin{tikzpicture}[scale=.5]
\node at (1,0) {$\delta_2$};
\node at (2,0) {$\delta_1$};
\node at (3,0) {$\delta_3$};

\draw (-.5,-1) node {$v_1$:} (1,-1) node {0} (2,-1) node {0} (3,-1) node {*};
\draw (-.5,-2) node {$v_2$:} (1,-2) node {0} (2,-2) node {1} (3,-2) node {*};
\draw (-.5,-3) node {$v_3$:} (1,-3) node {1} (2,-3) node {$x$} (3,-3) node {0};
\draw (-.5,-4) node {$v_4$:} (1,-4) node {1} (2,-4) node {$x$} (3,-4) node {1};
\end{tikzpicture}
\, \, 
\begin{tikzpicture}[scale=.5]
\node at (1,0) {$\delta_2$};
\node at (2,0) {$\delta_3$};
\node at (3,0) {$\delta_1$};

\draw  (1,-1) node {0} (2,-1) node {0} (3,-1) node {0};
\draw  (1,-2) node {0} (2,-2) node {0} (3,-2) node {1};
\draw  (1,-3) node {1} (2,-3) node {0} (3,-3) node {*};
\draw  (1,-4) node {1} (2,-4) node {1} (3,-4) node {*};
\end{tikzpicture}

\caption{$\phi(f) = C$}

\end{subfigure}
\caption{Examples of $v_1 < \cdots < v_4$ and $\delta_i = \delta(v_i,v_{i + 1})$.  Each $v_i$ is represented in binary with the leftmost entry corresponding to the most significant bit. A $*$ indicates that the corresponding bit can be  0 or 1 independently of all other $*$'s.}
\label{exfig}
\end{figure}

Let $v_1<\cdots < v_{2n^2}$ be vertices in $V$ and suppose, for the sake of contradiction, that there is a common color in the coloring $\phi$ on all the edges spanned by these vertices. Let $\delta_i = \delta(v_i, v_{i+1})$ and suppose that $\delta_{i_0} = \max_i \delta_i$.  By Property II, $\delta_{i_0} = \max_{i,j} \delta(v_i, v_j)$. In other words, the maximum $\delta$ value is achieved by two consecutive vertices. 

Our main claim is that $\min \{i_0, 2n^2-i_0\} < n$. To see this, suppose, for the sake of contradiction, that $i_0 \geq n$ and $2n^2-i_0 \geq n$. If the sequence $\{\delta_j\}_{j=1}^{i_0}$ is monotone increasing, then, by Properties I and II, there is no common color on the vertex set $\{v_1, \ldots, v_n, \ldots, v_{i_0}, v_{i_0 +1}\}$ in the coloring $\phi$ as there is no common color on the vertex set $\{\delta_1, \ldots, \delta_n\}$ in the coloring $\chi$. A similar argument shows that $\{\delta_j\}_{j=i_0}^{2n^2-1}$ is not monotone decreasing. Consequently, there exist $1\le j < i_0 < k < 2n^2-1$ such that 
$$\delta_j > \delta_{j+1}  < \delta_{i_0} > \delta_{k} < \delta_{k+1}.$$
For $f_1=\{v_j, v_{j+1}, v_k, v_{k+1}\}$, the corresponding sequence of $\delta$'s is 
$$(\delta_j, \delta(v_{j+1}, v_{k}), \delta_k).$$ 
Since  $\delta(v_{j+1}, v_{k})= \delta_{i_0}$ by Property II, we have the inequalities $\delta_j<\delta(v_{j+1}, v_{k})>\delta_k$ and, consequently, $\phi(f_1) = C$. For $f_2=\{v_j, v_{j+1}, v_{j+2}, v_{i_0+1}\}$, the corresponding sequence of $\delta$'s is 
$$(\delta_j, \delta_{j+1}, \delta(v_{j+2}, v_{i_0+1})).$$ 
Since  $\delta(v_{j+2}, v_{i_0+1})= \delta_{i_0}$, we have the inequalities $\delta(v_{j+2}, v_{i_0+1})> \delta_j >\delta_{j+1}$ and, consequently, $\phi(f_2) = A$. For $f_3=\{v_{i_0}, v_{k}, v_{k+1}, v_{k+2}\}$, the corresponding sequence of $\delta$'s is 
$(\delta_{i_0}, \delta_{k}, \delta_{k+1})$ and 
   $\delta_{i_0}> \delta_{k+1} >\delta_{k}$  yields $\phi(f_3) = B$. Since $f_1, f_2, f_3$ receive the color sets $C, A, B$, respectively, and $A \cap B \cap C = \emptyset$, we have a contradiction.

  We have shown that $\min \{i_0, 2n^2-i_0\} < n$. Color $i_0$ black if $i_0 < n$ and white if $2n^2-i_0 < n$. Assume that $i_0$ is black (a similar argument works if it is white). We now consider the vertex set $\{v_{i_0+1}, v_{i_0+2}, \ldots, v_{2n^2}\}$ and define $\delta_{i_1} = \max_{i_0<i \le 2n^2-1} \delta_i$ and repeat the procedure just described to color $i_1$ black or white. In this way we obtain a sequence $i_0, i_1, i_2, \ldots, i_j, \ldots, i_{2n}$, where $i_j$ is defined from a vertex subset of size $2n^2-(n-1)j$, together with a black/white coloring of the elements in the sequence.  By the pigeonhole principle, at least $n+1$ of the $i_j$'s, say $i_{j_1}, i_{j_2}, \ldots, i_{j_{n+1}}$,  have the same color.  We consider the set of vertices $\{v_{i_{j_1}}, v_{i_{j_2}}, \ldots, v_{i_{j_{n+1}}}\}$. By construction and Properties I and II, the corresponding sequence
  $\delta(v_{i_{j_1}}, v_{i_{j_2}}), \ldots, \delta(v_{i_{j_{n}}}, v_{i_{j_{n+1}}})$ is monotone and, in  particular, consists of $n$ distinct numbers. Consequently, it has no common color among its triples in the coloring $\chi$ and, therefore, $\{v_{i_{j_1}}, v_{i_{j_2}}, \ldots, v_{i_{j_{n+1}}}\}$ has no common color among its 4-tuples in the coloring $\phi$, a contradiction.  \end{proof}
We note, in particular, that Theorem~\ref{thm:stepup4} implies that $R_4(n; 3,2)$ is at least double exponential in 
$\sqrt{n}$. Moreover, a double-exponential lower bound for $R_3(n; 3, 2)$, which remains an open problem, would imply a triple-exponential lower bound for $R_4(n; 3,2)$.

\section{Concluding remarks}

One curiosity of Theorem~\ref{corlow} is that though the upper and lower bounds agree up to a constant in the exponent when $s/r$ is bounded away from $0$ and $1$, they diverge when $s = o(r)$ or $r - s = o(r)$. 
Indeed, when $r - s = o(r)$, they can diverge significantly. 
For instance, we only have that
\[2^{c n/\sqrt{r}} \leq R(n; r, r - \sqrt{r}) \leq 2^{c' n \log r}\]
and it would be interesting to decide which of these bounds is closer to the truth. We leave this as an open problem.

\begin{prob}
Improve the bounds for $R(n; r, r - \sqrt{r})$.
\end{prob}

Regarding hypergraphs, Theorem~\ref{thm:hyp1} 
leaves open the problem of whether $R_3(n; r, s)$ is double exponential when $r \leq 2s + 1$. We highlight the second case of interest (the first case, where $r = 3$ and $s = 1$, is already notorious).

\begin{prob}
Show that $R_3(n; 5, 2)$ is double exponential in $n$.
\end{prob}

The case $R_3(n; 4,2)$ also seems particularly interesting because of its connections to classical problems on hypergraph Ramsey numbers. 
Indeed, for positive integers $n,k,r,s,t$, note that 
$R_k(n;tr,ts) \ge R_k(n;r,s)$, since, given an $(r,s)$-coloring with no monochromatic $K_n$, we can replace each color by a set of $t$ new colors to find a $(tr, ts)$-coloring with no monochromatic $K_n$.  In the other direction, we have $R_k(n;r,s) \le R_k(n;r-1,s-1)$, since, given an $(r,s)$-coloring with no monochromatic $K_n$, we can produce an $(r-1, s-1)$-coloring with the same property by deleting color $r$ from each edge in which it appears and deleting an arbitrary color from every other edge. Therefore,
$$R_3(n,n) := R_3(n; 2,1) \le R_3(n; 4,2) \le R_3(n; 3,1) =: R_3(n,n,n).$$
That is, the problem of determining $R_3(n; 4,2)$ is intermediate between those of determining $R_3(n,n)$ and $R_3(n,n,n)$, so progress on understanding the former function may shed some light on the latter ones.

It is possible to improve Theorems~\ref{thm:stepup3} and \ref{thm:stepup4} further still for $k \geq 4$, in that the necessary inequality between $s$ and $r$ when stepping-up from $k$-uniform to $(k+1)$-uniform hypergraphs can be improved to $s \le (1-c_k)r$ where $c_k$ tends to $0$ as $k$ tends to infinity. 
However, it remains the case that we cannot show 
$R_{k+1}(n; r,  s)$ is at least exponential in $R_k(n; r, s)$ for all choices of $r$ and $s$. All of the difficulties are inherent in the following general question.

\begin{prob}
Determine the tower height of $R_k(n; r, r-1)$ for all $k \ge 3$ and $r \ge 2$.
\end{prob}

Since this problem, which includes the famous question of estimating $R_3(n,n)$ as a special case, is likely to be very difficult, a simpler problem might be to show that there exists a bounded $c$, independent of $r$, such that $R_k(n; r, r-1)$ has tower height at least $k-c$ for all $k \geq 3$ and $r \geq 2$.

\vspace{3mm}
\noindent
{\bf Acknowledgements.} This research was initiated during a visit to the American Institute of Mathematics under their SQuaREs program. We would like to thank Noga Alon, Matija Buci\'c and Yuval Wigderson for helpful conversations.

\end{document}